\newtheorem{thm}{Theorem}[section]
\newtheorem{cor}[thm]{Corollary}
\newtheorem{lem}[thm]{Lemma}
\theoremstyle{definition}
\theoremstyle{remark}
\begin{document}

\title{On the Multiple Covering Densities of Triangles}

\author{Kirati Sriamorn\footnote{pic\_kirat@hotmail.com}, Akanat Wetayawanich\footnote{miny\_math191@hotmail.com}}

\maketitle

\begin{abstract}
  Given a convex disk $K$ and a positive integer $k$, let $\vartheta_T^k(K)$ and $\vartheta_L^k(K)$ denote the $k$-fold translative covering density and the $k$-fold lattice covering density of $K$, respectively. Let $T$ be a triangle. In a very recent paper, K. Sriamorn \cite{sriamorn} proved that $\vartheta_L^k(T)=\frac{2k+1}{2}$. In this paper, we will show that $\vartheta_T^k(T)=\vartheta_L^k(T)$.
\end{abstract}

\textbf{Keywords} Multiple covering $\cdot$ Covering density $\cdot$ Triangle

\textbf{Mathematics Subject Classification} 05B40 $\cdot$ 11H31 $\cdot$ 52C15 $\cdot$ 52C20

\section{Introduction}
Let $D$ be a connected subset of $\mathbb{R}^2$.
A family of bounded sets $\mathcal{F}=\{S_1,S_2,\ldots\}$ is said to be a \emph{k-fold packing} of $D$ if $\bigcup S_i\subset D$ and each point of $D$ belongs to the interiors of at most $k$ sets of the family. In particular, when all $S_i$ are translates of a fixed measurable bounded set $S$ the corresponding family is called a \emph{k-fold translative packing} of $D$ with $S$. When the translation vectors form a lattice the corresponding family is called a \emph{k-fold lattice packing} of $D$ with $S$. Let $I=[0,1)$, and let $M(S,k,l)$ be the maximum number of the bounded sets in a $k$-fold translative packing of $lI^2$ with $S$. Then, we define
$$\delta_T^k(S)=\limsup_{l\rightarrow\infty} \frac{M(S,k,l)|S|}{|lI^2|}.$$
Similarly, we can define $\delta_L^k(S)$ for the $k$-fold lattice packings.

A family of bounded sets $\mathcal{F}=\{S_1,S_2,\ldots\}$ is said to be a \emph{k-fold covering} of $D$ if each point of $D$ belongs to at least $k$ sets of the family. In particular, when all $S_i$ are translates of a fixed measurable bounded set $S$ the corresponding family is called a \emph{k-fold translative covering} of $D$ with $S$. When the translative vectors form a lattice the corresponding family is called a \emph{k-fold lattice covering} of $D$ with $S$. Let $m(S,k,l)$ be the minimum number of the translates in a $k$-fold translative covering of $lI^2$ with $S$. Then, we define
$$\vartheta_T^k(S)=\liminf_{l\rightarrow\infty} \frac{m(S,k,l)|S|}{|lI^2|}.$$
Similarly, we can define $\vartheta_L^k(S)$ for the $k$-fold lattice coverings.

A family $\mathcal{F}=\{S_1,S_2,\ldots\}$ of bounded sets which is both a $k$-fold packing and a $k$-fold covering of $D$ is called a \emph{k-fold tiling} of $D$. In particular, if each point of $D$ belongs to exactly $j$ sets of the family, then we call $\mathcal{F}$ an \emph{exact j-fold tiling} of $D$.

For the case of $1$-fold coverings, Bambah and Rogers \cite{brass_Rogers} conjectured that $\vartheta^1_{T}(K)=\vartheta^1_{L}(K)$ holds for every convex disk $K$.
It is known that this conjecture is true for every centrally symmetric convex disk. Very little is known about the non-symmetric case. In 2010, J. Januszewski \cite{januszewski} showed that for every triangle $T$, $\vartheta^1_T(T)=\vartheta^1_L(T)=\frac{3}{2}$.
In a recent paper, K. Sriamorn and F. Xue  \cite{xuefei} proved that the conjecture is true for a class of convex disks (quarter-convex disks), which includes all triangles and convex quadrilaterals.

In a very recent paper, K. Sriamorn \cite{sriamorn} studied the $k$-fold lattice packings and coverings with  triangles $T$. He proved that
$$\delta_L^k(T)=\frac{2k^2}{2k+1},$$
and
$$\vartheta_L^k(T)=\frac{2k+1}{2}.$$

In this paper, we determine the $k$-fold translative covering densities of triangles. Januszewski's result inspires us to guess the following statement :

\begin{thm} \label{main_thm}
For every triangle $T$, we have $\vartheta_T^k(T)=\vartheta_L^k(T)=\frac{2k+1}{2}.$
\end{thm}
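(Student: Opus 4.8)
The plan is to establish the two equalities separately; the nontrivial half is a matching lower bound for the translative density. The inequality $\vartheta_T^k(T)\le \vartheta_L^k(T)$ is immediate, since every $k$-fold lattice covering of $lI^2$ by translates of $T$ is a fortiori a $k$-fold translative covering, so $m(T,k,l)$ never exceeds the corresponding lattice quantity; combined with Sriamorn's theorem \cite{sriamorn} this gives $\vartheta_T^k(T)\le \tfrac{2k+1}{2}$. What remains is to prove $\vartheta_T^k(T)\ge \tfrac{2k+1}{2}$, i.e.\ that any $k$-fold translative covering of a large square by translates of $T$ needs at least $\bigl(\tfrac{2k+1}{2}-o(1)\bigr)|lI^2|/|T|$ translates. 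Since the translative covering density is an affine invariant (an affine map carries a $k$-fold translative covering of a convex region onto a $k$-fold translative covering of its image, scaling all areas by the same factor, and the density in the definition is insensitive to the shape of the growing region), I may assume $T=T_0:=\mathrm{conv}\{(0,0),(1,0),(0,1)\}$, of area $\tfrac12$; the goal becomes: any $k$-fold translative covering of $[0,l)^2$ by $N$ translates of $T_0$ has $N\ge (2k+1)l^2-o(l^2)$.

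First I would pass to a convenient dual description. Writing the translates as $T_0+p_i$ with $p_i=(a_i,b_i)$ and putting $S=\{p_1,\dots,p_N\}$, the equivalence $(x,y)\in T_0+p_i \iff p_i\in (x,y)-T_0$ turns the covering condition into: every reflected triangle $(x,y)-T_0$ with apex $(x,y)\in[0,l)^2$ contains at least $k$ points of $S$. I must show $|S|\ge (2k+1)l^2-o(l^2)$. A plain averaging — integrating the point count of $(x,y)-T_0$ over the apex, or equivalently slicing by lines parallel to a side of $T_0$ — only delivers $|S|\ge 2kl^2-o(l^2)$, which is exactly the area bound $\vartheta_T^k\ge k$; the whole difficulty is the extra $l^2$. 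That extra term is a rigidity effect: if $|S|$ were close to $2kl^2$ then for almost every apex the point count would be exactly $k$, so $S+T_0$ would be an essentially exact $k$-fold tiling of a large region of the plane, which is impossible — a triangle is not a multiple tile under translations. The heart of the proof is to make this quantitative.

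For that I would follow and adapt the sweeping argument behind Sriamorn's lattice bound. Sweep $[0,l)^2$ by the vertical lines $L_t=\{x=t\}$, $t\in[0,l)$. On $L_t$ the triangle $T_0+p_i$ is present exactly for $t\in[a_i,a_i+1]$, where it cuts out the segment $\{t\}\times[b_i,\,b_i+1-(t-a_i)]$: as $t$ grows this segment appears abruptly with length $1$ (the full leg of the triangle) and then shrinks at unit rate to a point before vanishing — and it is precisely this asymmetry, impossible for a centrally symmetric body, that forces inefficiency. Let $C(t)$ be the total length of the segments on $L_t$ and $N(t)$ their number. The $k$-fold covering of $\{t\}\times[0,l)$ forces $C(t)\ge kl$ for every $t$, while $\int_0^l C(t)\,dt=\tfrac12 N-o(l^2)$ because each triangle has area $\tfrac12$ and only $O(l)$ of them meet the vertical boundary strips. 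Between consecutive ``births'' $C$ decreases at rate $N(t)$, and at each birth it jumps up by $1$; analysing this evolution against the floor $C(t)\ge kl$ — together with the fact that, for every active segment, the point just below its lower endpoint must itself be $k$-fold covered by other active segments, which constrains how the lower endpoints $b_i$ can be distributed — one should be able to extract the surplus $\int_0^l\bigl(C(t)-kl\bigr)\,dt\ge \tfrac12 l^2-o(l^2)$, i.e.\ $N\ge(2k+1)l^2-o(l^2)$. With the upper bound this yields $\vartheta_T^k(T)=\vartheta_L^k(T)=\tfrac{2k+1}{2}$.

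The main obstacle, and the reason this does not follow formally from the lattice result, is executing this surplus estimate in the absence of periodicity: for a lattice covering the functions $C(t),N(t)$ and the pattern of the $b_i$ are periodic, so it suffices to examine one fundamental cell, whereas for an arbitrary translative covering one must bound the accumulated surplus uniformly over the whole sweep and rule out that it is diluted to only $o(l^2)$. I expect the most robust route is a charging scheme: attach to each translate $T_0+p_i$ a region of definite area, sitting near one of its vertices (the vertex being the feature that obstructs tiling), on which over-coverage of multiplicity at least one is guaranteed, and show that these regions are essentially pairwise disjoint; summing then gives $\int_{[0,l)^2}(\mathrm{mult}-k)\ge \tfrac12 l^2-o(l^2)$, which is the required excess. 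Pinning down a genuine, quantitatively controlled piece of over-coverage forced near a vertex — and controlling the overlaps of these pieces — is, I anticipate, the crux; boundary effects account for only $O(l)=o(l^2)$ translates and are harmless throughout.
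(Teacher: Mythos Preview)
Your setup is fine: reducing to the standard triangle, identifying the target $N\ge(2k+1)l^2-o(l^2)$, and recognising that the naive area bound only gives $N\ge 2kl^2$ are all correct. But the proposal stops exactly at the point where a proof must begin. You state that the sweeping analysis ``should be able to extract the surplus $\int_0^l(C(t)-kl)\,dt\ge\tfrac12 l^2-o(l^2)$'' and then, when that is not made to work, fall back on ``I expect the most robust route is a charging scheme'' with regions of over-coverage ``near a vertex'' that are ``essentially pairwise disjoint''. Neither claim is proved, and you yourself flag the crux as open. The difficulty you identify --- that without periodicity the surplus might be diluted --- is real, and nothing in your outline addresses it. As written this is a plan, not a proof.

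The paper's argument supplies precisely the missing mechanism, and it is combinatorial rather than measure-theoretic. Given a normal $k$-fold covering $\{T_1,\dots,T_N\}$ of $lI^2$, one orders the triangles by the relation $\prec$ on their lower-left vertices and, for each $T_i$, removes the union $U_i$ of all $k$-fold intersections of triangles that cut $T_i$. The remainder $S_i=T_i\setminus U_i$ is shown to be a half-open $r_i$-stair polygon contained in $T_i$, and the family $\{S_i\}$ is an \emph{exact} $k$-fold tiling of $lI^2$. The whole weight of the proof is then the combinatorial inequality
\[
\sum_{i=1}^{N} r_i \le (2k-1)N,
\]
obtained by a counting argument on the ``inner corners'' of the stair polygons (each inner corner forces the lower-left vertex of some other $S_j$ to lie in $S_i$, and no vertex can be charged more than $k$ times). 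Since the maximal area of an $r$-stair polygon inside $T$ is $A(r)=\tfrac{r+1}{2(r+2)}$, a concave increasing function, Jensen gives
\[
k\,l^2=\sum_i |S_i|\le\sum_i A(r_i)\le N\,A\!\Bigl(\tfrac{1}{N}\sum_i r_i\Bigr)\le N\,A(2k-1)=\frac{kN}{2k+1},
\]
hence $N\ge(2k+1)l^2$ with no $o(l^2)$ error at all. Your charging intuition is morally in the right direction --- the inner corners of the $S_i$ are exactly the ``pieces of forced over-coverage near a vertex'' you were reaching for --- but the stair-polygon decomposition is what makes the charging rigorous and the disjointness automatic.
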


In order to prove the result, we apply the analogous proving approach used in the paper \cite{xuefei}. Firstly, we modify Januszewski's cutting idea and get some related properties. Secondly, by using our cutting method, we cut the triangles of an arbitrary translative covering into stair polygons. Finally, we show that the corresponding stair polygons form an exact $k$-fold tiling of the plane which has special properties that yield the main theorem.

\section{Normal $k$-Fold Translative Covering}
Let $D$ be a connected subset of $\mathbb{R}^2$ and $\mathcal{K}=\{K_1,K_2,\ldots\}$ a family of convex disks. Suppose that $\mathcal{K}$ is a $k$-fold covering of $D$. We say that $\mathcal{K}$ is \emph{normal} provided $K_i\neq K_j$ for all $i\neq j$. When $\mathcal{K}$ is normal and $K_i$ are translates of a fixed convex disk $K$, the corresponding family is called a \emph{normal k-fold translative covering} of $D$ with $K$. Let $\widetilde{m}(K,k,l)$ be the minimum number of the convex disks in a normal $k$-fold translative covering of $lI^2$ with $K$. Then, we define
$$\widetilde{\vartheta}_T^{k}(K)=\liminf_{l\rightarrow\infty}\frac{\widetilde{m}(K,k,l)|K|}{|lI^2|}.$$

\begin{thm}\label{normal_covering}
For every convex disk $K$, we have
$$\widetilde{\vartheta}_T^{k}(K)=\vartheta_T^k(K).$$
\end{thm}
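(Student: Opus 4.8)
The inequality $\widetilde{\vartheta}_T^{k}(K)\ge\vartheta_T^k(K)$ is immediate, since every normal $k$-fold translative covering of $lI^2$ is in particular a $k$-fold translative covering, whence $m(K,k,l)\le\widetilde m(K,k,l)$ for every $l$ and the claim follows by taking $\liminf$. The content is the reverse inequality, and the plan is to start from a (possibly non-normal) $k$-fold covering whose density is close to $\vartheta_T^k(K)$, enlarge $K$ by a tiny homothety factor to create some ``slack'', perturb the repeated translates into general position without destroying the covering, and finally rescale back. Since translating $K$ changes neither $\vartheta_T^k$ nor $\widetilde\vartheta_T^k$, I would first assume $o\in\operatorname{int}K$ and fix $r>0$ with $B(o,r)\subseteq K$.

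The geometric fact I would record is that for every $\eta>0$ one has $(1+\eta)K\supseteq K+B(o,\eta r)$: for $y\in K$ and $|z|\le\eta r$ the point $\tfrac{y+z}{1+\eta}$ is the convex combination $\tfrac1{1+\eta}\,y+\tfrac{\eta}{1+\eta}\cdot\tfrac z\eta$ of the two points $y,\tfrac z\eta\in K$, hence lies in $K$. Consequently, if $x\in K+t$ then $B(x,\eta r)\subseteq(1+\eta)K+t$, so for any vector $s$ with $|s|<\eta r$ we still have $x\in(1+\eta)K+t+s$; moreover $o\in K$ forces $K\subseteq(1+\eta)K$, so nothing is lost near the boundary. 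Putting these together: if $\{K+t_i\}_{i=1}^m$ is a $k$-fold covering of $lI^2$ and $|s_i|<\eta r$ for all $i$, then $\{(1+\eta)K+t_i+s_i\}_{i=1}^m$ is again a $k$-fold covering of $lI^2$. This robustness step is the crux of the argument and the only place where anything nontrivial happens.

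With this in hand the argument concludes as follows. Fix $\varepsilon>0$ and $\eta>0$, and choose, along a sequence $l\to\infty$ realizing the $\liminf$ defining $\vartheta_T^k(K)$, a value of $l$ together with a $k$-fold translative covering $\{K+t_i\}_{i=1}^m$ of $lI^2$ with $\frac{m|K|}{l^2}<\vartheta_T^k(K)+\varepsilon$. I would then pick vectors $s_i$ with $|s_i|<\eta r$ making the points $t_i+s_i$ pairwise distinct; this is possible because the forbidden configurations $\{s_i-s_j=t_j-t_i\}$ form a finite union of proper affine subspaces, so a generic (or greedily chosen) tuple works. By the previous paragraph, $\{(1+\eta)K+(t_i+s_i)\}_{i=1}^m$ is a \emph{normal} $k$-fold translative covering of $lI^2$ with the disk $(1+\eta)K$; applying the homothety $x\mapsto x/(1+\eta)$ turns it into a normal $k$-fold translative covering of $\tfrac{l}{1+\eta}I^2$ with $K$ using $m$ translates, so $\widetilde m\!\bigl(K,k,\tfrac{l}{1+\eta}\bigr)\le m$. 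Since $\tfrac{l}{1+\eta}\to\infty$ along our sequence, this yields
$$\widetilde\vartheta_T^k(K)\le(1+\eta)^2\,\frac{m|K|}{l^2}<(1+\eta)^2\bigl(\vartheta_T^k(K)+\varepsilon\bigr),$$
and letting $\varepsilon\to0$ and then $\eta\to0$ gives $\widetilde\vartheta_T^k(K)\le\vartheta_T^k(K)$, which completes the proof.
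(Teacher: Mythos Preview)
Your proof is correct and follows essentially the same route as the paper's: enlarge $K$ by a homothety factor $1+\eta$, use the resulting slack to perturb repeated translates into distinct ones (thereby producing a normal covering), and then rescale and let $\eta\to 0$. The paper's version differs only in packaging---it asserts directly that $K_i\subset(1+\varepsilon)K_i+(x,y)$ for infinitely many $(x,y)$ rather than proving $K+B(o,\eta r)\subseteq(1+\eta)K$ via convexity, and it works with the inequality $m(K,k,l)\ge\widetilde m((1+\varepsilon)K,k,l)$ for \emph{all} $l$ together with the scaling invariance $\widetilde\vartheta_T^k((1+\varepsilon)K)=\widetilde\vartheta_T^k(K)$, which avoids having to pass to a sequence realizing the $\liminf$.
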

\begin{proof}
Obviously, we have that $\widetilde{\vartheta}_T^{k}(K)\geq\vartheta_T^k(K)$. Let $\{K_1,\ldots K_m\}$ be a $k$-fold translative covering of $lI^2$ with $K$. For any $K_i$, one can see that for every $\varepsilon>0$, there exist infinitely many points $(x,y)$ in the plane such that $K_i\subset (1+\varepsilon)K_i+(x,y)$. Hence, for every $\varepsilon>0$, there exist $m$ points $(x_1,y_1),\ldots,(x_m,y_m)$ in the plane such that $\{(1+\varepsilon)K_1+(x_1,y_1),\ldots,(1+\varepsilon)K_m+(x_m,y_m)\}$ is a normal $k$-fold translative covering of $lI^2$ with $(1+\varepsilon)K$. Therefore, $m\geq \widetilde{m}((1+\varepsilon)K,k,l)$. This implies that $m(K,k,l)\geq \widetilde{m}((1+\varepsilon)K,k,l)$, and hence
\begin{align*}
\vartheta_T^{k}(K)&=\liminf_{l\rightarrow\infty}\frac{m(K,k,l)|K|}{|lI^2|}\\
&\geq\liminf_{l\rightarrow\infty}\frac{\widetilde{m}((1+\varepsilon)K,k,l)|K|}{|lI^2|}\\
&=\frac{1}{(1+\varepsilon)^2}\widetilde{\vartheta}_T^{k}((1+\varepsilon)K)\\
&=\frac{1}{(1+\varepsilon)^2}\widetilde{\vartheta}_T^{k}(K).
\end{align*}
By letting $\varepsilon$ tend to zero, one gets the desired result.
\end{proof}

\section{Definitions of Cutting and Stair Polygons}\label{def_of_cut}
Denote by $T$ the triangle with vertices $(0,0)$, $(1,0)$ and $(0,1)$.
If $T'=T+(x,y)$ where $(x,y)\in\mathbb{R}^2$, then we denote by $v_{T'}$ the vertex $(x,y)$ of $T'$.

For $(x_1,y_1),(x_2,y_2)\in\mathbb{R}^2$, we define the relation $\prec$ by $(x_1,y_1)\prec (x_2,y_2)$ if and only if either
$$x_1+y_1<x_2+y_2$$
or
$$x_1+y_1=x_2+y_2 \text{~and~} x_1<x_2.$$
One can easily show that $\prec$ is a strict partial ordering over $\mathbb{R}^2$.

Suppose that $T_1$ and $T_2$ are two distinct translates of $T$ and $T_1\cap T_2\neq\emptyset$. We say that $T_1$ cuts $T_2$ provided $v_{T_2}\prec v_{T_1}$ (Fig. \ref{typeofcut}).

\begin{figure}[!ht]
  \centering
    \includegraphics[scale=.70]{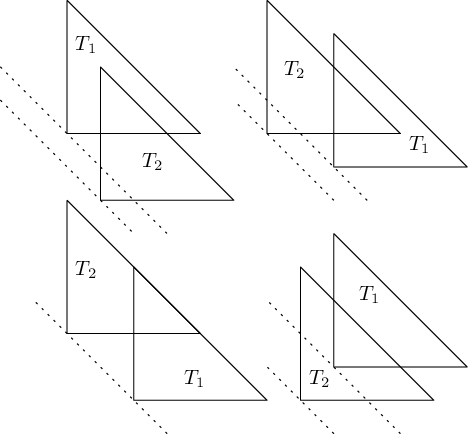}
   \caption{Types of cutting ($T_1$ cuts $T_2$)}\label{typeofcut}
\end{figure}

By the definition, one gets the following statements :

\begin{itemize}
\item Suppose that $T_1$ and $T_2$ are distinct translates of $T$. If $T_1\cap T_2\neq\emptyset$ then either $T_1$ cuts $T_2$ or $T_2$ cuts $T_1$.
\item Suppose that $T_1,~T_2$ and $T_3$ are three distinct translates of $T$ and $T_1\cap T_2\cap T_3\neq\emptyset$. If $T_1$ cuts $T_2$ and $T_2$ cuts $T_3$, then $T_1$ cuts $T_3$.
\item Suppose that $T_1,\ldots,T_n$ are $n$ distinct translates of $T$ and $T_1\cap\cdots\cap T_n\neq\emptyset$. Then, there exists $i\in\{1,\ldots,n\}$ such that $T_j$ cuts $T_i$ for all $j\neq i$.
\end{itemize}

For a non-negative integer $r$, we call a planar set $S$ a \emph{half-open r-stair polygon} (Fig. \ref{stairpolygon}) if there are $x_0<x_1<\cdots< x_{r+1}$ and $y_0>y_1>\cdots > y_r>y_{r+1}$ such that
$$S=\bigcup_{i=0}^r[x_i,x_{i+1})\times[y_{r+1},y_i).$$

\begin{figure}[!ht]
  \centering
    \includegraphics[scale=.70]{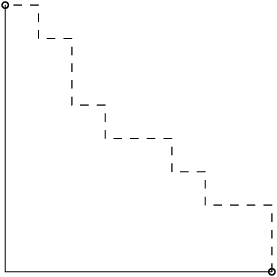}
   \caption{A half-open $5$-stair polygon}\label{stairpolygon}
\end{figure}

Let $A(r)$ denote the maximum area of a half-open $r$-stair polygon contained in $T$. Clearly, $A$ is an increasing function. By elementary calculations, one can obtain
\begin{equation}
A(r)=\frac{r+1}{2(r+2)},
\end{equation}
where $r=0,1,2,\ldots$. Let $B$ be the function on $[0,+\infty)$ defined by
\begin{equation}
B(x)=\frac{x+1}{2(x+2)},
\end{equation}
It is obvious that $B$ is an increasing concave function and $B(r)=A(r)$ , for all $r=0,1,2,\ldots$. For convenience, we also denote the function $B$ by $A$. In \cite{sriamorn}, K. Sriamorn showed that
\begin{equation}\label{lattice_density}
\vartheta_L^k(T)=\frac{k|T|}{A(2k-1)}=\frac{2k+1}{2}.
\end{equation}
\section{Cutting Triangles into Stair Polygons}
In this section, we assume that $\mathcal{T}=\{T_1,T_2,\ldots\}$ is a normal $k$-fold translative covering of $\mathbb{R}^2$ with $T$. Moreover, we assume that $\mathcal{T}$ is \emph{locally finite}, i.e., for every $l>0$,
any translate of $lI^2$ is intersected by a finite number of   members of $\mathcal{T}$ only.

Denote by $\mathcal{C}_i$ the collection of triangles $T_j$ that cut $T_i$. Since $\mathcal{T}$ is locally finite, we know that $\mathcal{C}_i$ is finite . Let
\begin{equation*}
U_i=\bigcup_{\substack{T_{i_1},\ldots,T_{i_k}\in\mathcal{C}_i\\ i_1,\ldots,i_k \text{~are distinct}}}(T_{i_1}\cap\cdots\cap T_{i_k}),
\end{equation*}
and
$$S_i=T_i\setminus U_i.$$

For any right triangle $T'$, we denote by $H(T')$ for the hypotenuse of $T'$. One can show that for every $T_{i_1},\ldots,T_{i_k}\in\mathcal{C}_i$, $T_i\cap T_{i_1}\cap\ldots\cap T_{i_k}$ is a right triangle ( if not empty ) similar to $T$ and  $H(T_i\cap T_{i_1}\cap\ldots\cap T_{i_k})\subset H(T_i)$.

\begin{lem}\label{S_i_stair_polygon}
$S_i$ is a half-open stair polygon.
\end{lem}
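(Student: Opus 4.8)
The plan is to analyze the geometry of a single triangle $T_i$ together with the finite collection $\mathcal{C}_i$ of triangles that cut it, and to show that removing $U_i$ from $T_i$ leaves a region of the form $\bigcup_{j=0}^r[x_j,x_{j+1})\times[y_{r+1},y_j)$ after suitably fixing coordinates. First I would invoke Remark~\ref{hypothenuse}: each $k$-fold intersection $T_i\cap T_{i_1}\cap\cdots\cap T_{i_k}$ with $T_{i_1},\dots,T_{i_k}\in\mathcal{C}_i$ is a right triangle similar to $T$ whose hypotenuse lies on the hypotenuse $H(T_i)$. Writing $T_i=T+v_{T_i}$ and translating so that $v_{T_i}=(0,0)$, such a sub-triangle is determined by the point on $H(T_i)$ where its right-angle vertex sits, i.e. it has the form $\{(x,y): x\ge a,\ y\ge 0,\ x+y< 1\}\cap T_i$ for some $a\in[0,1)$ — equivalently the closure of $T_i$ minus the half-open "corner rectangle-stack" to its lower-left. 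So $U_i$ is a union of such corner triangles, and $S_i=T_i\setminus U_i$ is $T_i$ with finitely many nested lower-left corners of this shape chopped off.

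Next I would make the combinatorial reduction explicit. Since $\mathcal{C}_i$ is finite, only finitely many distinct $k$-fold intersections $T_i\cap T_{i_1}\cap\cdots\cap T_{i_k}$ occur; let $a_1<a_2<\cdots<a_p$ be the distinct values of the parameter $a\in[0,1)$ arising as above, together with the "full" value that may occur. Then $U_i=\bigcup_{t}\bigl(T_i\cap\{x\ge a_t\}\bigr)$ over the relevant $t$, so $U_i$ is itself one of these corner triangles — namely the one for the smallest such $a$, call it $a_{\min}$ — unless no $k$-fold intersection meets $T_i$, in which case $U_i=\emptyset$ and $S_i=T_i$ is already a half-open $0$-stair polygon (it is the half-open triangle, which contains the half-open square $[0,1/2)\times[0,1/2)$ pattern with $r=1$... more simply, $T_i$ itself is a degenerate stair polygon; one should double-check the $r=0$ case matches the definition, adjusting by taking $r$ large enough and letting some $x_j$, $y_j$ coincide in the limit, or by allowing the trivial case separately). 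Wait — I need to be more careful: $U_i$ need not be a single corner triangle, because the $k$-fold intersections involving triangles of $\mathcal{C}_i$ that do not all share a common point with $T_i$ contribute nothing, but among those that do, $U_i$ is the union, hence equals the largest corner triangle, hence $S_i = T_i \setminus (\text{largest corner triangle}) = $ the half-open region between two nested similar right triangles sharing the vertex $(1,0)$-side...

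Let me restart this paragraph cleanly. The key point is: $U_i$, being a union of half-open right triangles each of the form $R_a := \{(x,y)\in T_i : x+y \ge \text{something}\}$ — no. The correct description from Remark~\ref{hypothenuse}: each such intersection is $\{(x,y) : x \ge a_t,\ y < 1 - x,\ y \ge b_t\}$ for parameters determined by the cutting triangles; its hypotenuse is on $H(T_i)$ and it is half-open inheriting from the $T_j$'s. The union $U_i$ of finitely many such is a half-open region whose complement in $T_i$ is bounded by finitely many horizontal and vertical segments plus part of $H(T_i)$ — precisely a descending staircase. Concretely, I would: (1) show each $k$-fold intersection's complement-in-$T_i$ is a half-open $1$-stair polygon (or degenerate); (2) show the intersection of two half-open stair polygons sharing the hypotenuse-corner structure is again such; (3) conclude $S_i=\bigcap_t (T_i\setminus(\text{corner}_t))$ is a finite intersection of stair polygons, hence a half-open stair polygon, reading off the $x_j$'s and $y_j$'s as the union of all the breakpoints.

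The main obstacle I anticipate is the bookkeeping of half-open-ness: proving that $S_i$ is \emph{exactly} $\bigcup_{j=0}^r[x_j,x_{j+1})\times[y_{r+1},y_j)$ — closed on the left/top, open on the right/bottom — rather than some set that merely has that shape up to boundary. This requires tracking which edges of each $T_j\in\mathcal{C}_i$ and each $T_i\cap T_{i_1}\cap\cdots\cap T_{i_k}$ are included: $T_i=[0,1)\times[0,1)\cap\{x+y<1\}$-type set contributes its lower and left closed edges, and the subtracted corners, being themselves half-open with their \emph{lower-left} included, remove points in a way that — after the dust settles — leaves the staircase half-open in the stated sense. I would handle this by choosing the coordinate convention for $T$ (as in Section~\ref{def_of_cut}, $v_T$ included, hypotenuse excluded) and verifying the half-open pattern is preserved under the two operations "intersect with a corner complement" and "take finite intersections," which is a finite check on the four edge-types. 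Once that is in place, the identification with the $r$-stair polygon definition is immediate, with $r+1$ equal to the number of distinct vertical breakpoints.
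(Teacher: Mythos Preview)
Your proposal has a genuine gap: you never invoke the hypothesis that $\mathcal{T}$ is a $k$-fold covering of $\mathbb{R}^2$, and without it the conclusion is false. The definition of a half-open $r$-stair polygon admits only horizontal and vertical edges, so for $S_i = T_i\setminus U_i$ to be a stair polygon the diagonal hypotenuse $H(T_i)$ must be entirely removed, i.e.\ one needs $H(T_i)\subset U_i$. Your argument attempts to describe $S_i$ purely from the geometry of the finitely many corner triangles $T_i\cap T_{i_1}\cap\cdots\cap T_{i_k}$ with $T_{i_\ell}\in\mathcal{C}_i$, but nothing you wrote forces those corners to cover all of $H(T_i)$. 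Your own remark makes the problem explicit: you say that if $U_i=\emptyset$ then ``$S_i=T_i$ is already a half-open $0$-stair polygon,'' but $T_i$ is a triangle with a diagonal side and is \emph{not} a stair polygon of any $r$. The covering hypothesis is exactly what rules this case out.

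The paper's proof supplies precisely this missing step. After observing (via Remark~\ref{hypothenuse}) that each contribution to $U_i$ inside $T_i$ is a right triangle with hypotenuse on $H(T_i)$, it argues that it suffices to show $H(T_i)\subset U_i$; the stair shape then follows. To get $H(T_i)\subset U_i$, fix $(x,y)\in H(T_i)$ and look at a point $(x+\varepsilon,y+\varepsilon)$ just beyond the hypotenuse: by local finiteness one can choose $\varepsilon$ so that every $T'\in\mathcal{T}$ containing $(x+\varepsilon,y+\varepsilon)$ also contains $(x,y)$, and by the $k$-fold covering property there are at least $k$ such $T'$. Each of these $T'$ cuts $T_i$ (since $v_{T_i}\prec v_{T'}$), so $(x,y)$ lies in a $k$-fold intersection from $\mathcal{C}_i$, hence in $U_i$. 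Once $H(T_i)\subset U_i$ is established, your intended bookkeeping of the breakpoints $(a_t,b_t)$ and half-open conventions can be carried out, but that part is routine; the substantive content is the covering argument you omitted.
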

\begin{proof} It is easy to see that for every $T'\in\mathcal{C}_i$, $v_{T_i}\notin T'$. Hence, it is obvious that $v_{T_i}\in S_i$. Therefore $S_i\neq\emptyset$. It suffices to show that $H(T_i)\subset U_i$. Suppose that $(x,y)\in H(T_i)$. Let
$$\mathcal{F}_1=\{T'\in\mathcal{T}: (x,y)\in T'\text{~and~}(x+\varepsilon,y+\varepsilon)\in T' \text{~for some positive~}\varepsilon<1\}$$
and
$$\mathcal{F}_2=\{T'\in\mathcal{T}: (x,y)\notin T'\text{~and~}(x+\varepsilon,y+\varepsilon)\in T' \text{~for some positive~}\varepsilon<1\}$$
Since $\mathcal{T}$ is locally finite, we know that $\mathcal{F}_2$ is a finite collection. Therefore there exists a positive $\varepsilon_0<1$ such that $(x+\varepsilon_0,y+\varepsilon_0)\notin T'$ for all $T'\in\mathcal{F}_2$. But $\mathcal{T}$ is a $k$-fold covering of $\mathbb{R}^2$, so there must exist at least $k$ distinct translates of $T$ that contain $(x+\varepsilon_0,y+\varepsilon_0)$. Hence $card\{\mathcal{F}_1\}\geq k$. One can observe that for all $T'\in\mathcal{F}_1$, $T'$ cuts $T_i$. It follows that $(x,y)\in U_i$.
\end{proof}

\begin{lem}\label{not_exceed_k_1}
Suppose that $i_1,\ldots,i_{k+1}$ are $k+1$ distinct positive integers. Then
$$\bigcap_{j=1}^{k+1}S_{i_j}=\emptyset.$$
\end{lem}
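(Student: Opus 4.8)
The plan is to argue by contradiction. Suppose $p \in \bigcap_{j=1}^{k+1} S_{i_j}$; in particular $p$ lies in each of the $k+1$ distinct translates $T_{i_1}, \ldots, T_{i_{k+1}}$. By Lemma \ref{cut_by_the_others} (applied to these $k+1$ triangles, which have the common point $p$), there is some index, say $i_1$ after relabeling, such that $T_{i_j}$ cuts $T_{i_1}$ for every $j = 2, \ldots, k+1$. So $T_{i_2}, \ldots, T_{i_{k+1}}$ are $k$ distinct members of $\mathcal{C}_{i_1}$, and $p \in T_{i_1} \cap T_{i_2} \cap \cdots \cap T_{i_{k+1}} \subset T_{i_2} \cap \cdots \cap T_{i_{k+1}}$, which is one of the sets in the union defining $U_{i_1}$. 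Hence $p \in U_{i_1}$, so $p \notin T_{i_1} \setminus U_{i_1} = S_{i_1}$, contradicting $p \in S_{i_1}$.

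The key steps, in order: first observe that a point in the intersection of all the $S_{i_j}$ in particular lies in all the $T_{i_j}$, so the common intersection of the triangles is nonempty and Lemma \ref{cut_by_the_others} applies; second, extract the ``cut by all others'' index $i_1$; third, note that the remaining $k$ triangles then witness membership of $p$ in $U_{i_1}$; fourth, conclude $p \notin S_{i_1}$, the desired contradiction. Each step is short and essentially bookkeeping once the right lemma is invoked.

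I do not expect any serious obstacle here — the statement is really a direct combination of Lemma \ref{cut_by_the_others} with the definitions of $\mathcal{C}_i$, $U_i$, and $S_i$. The only point requiring a moment's care is making sure the $k$ triangles singled out as cutting $T_{i_1}$ are indeed \emph{distinct} (so that $T_{i_2} \cap \cdots \cap T_{i_{k+1}}$ genuinely appears in the indexed union defining $U_{i_1}$, which ranges over distinct tuples); this is immediate since $i_1, \ldots, i_{k+1}$ were assumed distinct. One should also note that $\mathcal{C}_{i_1}$ being finite (from local finiteness) is not needed for this argument, only for $U_{i_1}$ to be well-behaved elsewhere.
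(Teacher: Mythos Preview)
Your proof is correct and follows essentially the same route as the paper: both invoke Lemma~\ref{cut_by_the_others} to single out one triangle that is cut by the other $k$, then observe that the common point lies in the corresponding $U$-set, forcing it out of that $S$. The only cosmetic difference is that you phrase the argument by contradiction (assuming a point $p$ exists), whereas the paper first disposes of the trivial case $T_{i_1}\cap\cdots\cap T_{i_{k+1}}=\emptyset$ and then argues directly.
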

\begin{proof}
If $T_{i_1}\cap\cdots\cap T_{i_{k+1}}=\emptyset$, then it is obvious that $S_{i_1}\cap\cdots\cap S_{i_{k+1}}= \emptyset$. Assume that $T_{i_1}\cap\cdots\cap T_{i_{k+1}}\neq \emptyset$. Without loss of generality,  we may assume that $T_{i_j}$ cuts $T_{i_{k+1}}$ for all $j=1,\ldots,k$. Therefore $T_{i_1}\cap\cdots\cap T_{i_{k}}\subset U_{i_{k+1}}$. Hence
$$\bigcap_{j=1}^{k+1}S_{i_j}=(T_{i_1}\cap\cdots\cap T_{i_{k+1}})\setminus(U_{i_1}\cup\cdots\cup U_{i_{k+1}})= \emptyset.$$
\end{proof}

\begin{lem}\label{at_least_k_S_contain}
For every point $(x,y)$ in $\mathbb{R}^2$, there exist at least $k$ distinct sets $S_{i_1},\ldots,S_{i_{k}}$ such that $(x,y)\in S_{i_j}$, for $j=1,\ldots,k$.
\end{lem}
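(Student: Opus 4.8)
The plan is to reduce the statement to a purely combinatorial fact about the strict order $\prec$ applied to the vertices of the triangles passing through the given point. Fix $p=(x,y)$ and set $\mathcal{A}=\{T_j\in\mathcal{T}:p\in T_j\}$. Since $\mathcal{T}$ is locally finite, $\mathcal{A}$ is finite, and since $\mathcal{T}$ is a $k$-fold covering of $\mathbb{R}^2$ we have $n:=\mathrm{card}\,\mathcal{A}\ge k$. Because $\mathcal{T}$ is normal, the triangles in $\mathcal{A}$ have pairwise distinct vertices $v_{T_j}$, and $\prec$ restricts to a \emph{total} strict order on any set of pairwise distinct points of $\mathbb{R}^2$ (if the coordinate sums agree and the points differ, their $x$-coordinates differ). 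Hence we may enumerate $\mathcal{A}=\{T_{j_1},\dots,T_{j_n}\}$ so that $v_{T_{j_1}}\prec v_{T_{j_2}}\prec\cdots\prec v_{T_{j_n}}$.

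The key step is the following characterization: for every $T_i\in\mathcal{A}$,
$$p\in S_i\quad\Longleftrightarrow\quad \mathrm{card}\{T_j\in\mathcal{A}:v_{T_i}\prec v_{T_j}\}\le k-1.$$
To prove it, observe first that a translate $T_j$ belongs to $\mathcal{C}_i$ and contains $p$ if and only if $T_j\in\mathcal{A}$ and $v_{T_i}\prec v_{T_j}$: indeed $p\in T_i\cap T_j$ forces $T_i\cap T_j\neq\emptyset$, and then "$T_j$ cuts $T_i$" means precisely $v_{T_i}\prec v_{T_j}$ (which in particular forces $T_j\neq T_i$). Second, from the definition of $U_i$ as the union of all $k$-fold intersections of distinct members of $\mathcal{C}_i$, one has $p\in U_i$ if and only if at least $k$ members of $\mathcal{C}_i$ contain $p$. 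Combining these two facts with $S_i=T_i\setminus U_i$ and with the hypothesis $p\in T_i$ yields the displayed equivalence.

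Now apply the characterization to $T_i=T_{j_a}$: the set $\{T_j\in\mathcal{A}:v_{T_{j_a}}\prec v_{T_j}\}$ is exactly $\{T_{j_{a+1}},\dots,T_{j_n}\}$, of cardinality $n-a$, so $p\in S_{j_a}$ precisely when $n-a\le k-1$, i.e. when $a\ge n-k+1$. Since $n\ge k$, the indices $n-k+1,\,n-k+2,\,\dots,\,n$ all lie in $\{1,\dots,n\}$ and are pairwise distinct, so $p$ belongs to the $k$ distinct sets $S_{j_{n-k+1}},\dots,S_{j_n}$, which proves the lemma. I do not expect a real obstacle here; the only point needing care is the bookkeeping in the characterization of $p\in S_i$, in particular the reduction of the cutting relation among triangles that all contain $p$ to a comparison of their vertices under $\prec$. (As a byproduct, together with Lemma \ref{not_exceed_k_1}, every point of $\mathbb{R}^2$ lies in \emph{exactly} $k$ of the sets $S_i$.)
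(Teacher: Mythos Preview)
Your proof is correct and follows essentially the same approach as the paper's: both order the finitely many triangles through $p$ by $\prec$ and then argue that the top $k$ of them give the desired $S_{i_j}$'s. The only difference is that you spell out the characterization $p\in S_i\Leftrightarrow\mathrm{card}\{T_j\in\mathcal{A}:v_{T_i}\prec v_{T_j}\}\le k-1$ in full (and note the ``exactly $k$'' byproduct), whereas the paper simply asserts ``one can easily show that $(x,y)\notin U_{i_j}$ for $j=1,\dots,k$'' after listing the triangles in the reverse order.
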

\begin{proof}
Let $\mathcal{F}=\{T'\in\mathcal{T} : (x,y)\in T'\}$. Since $\mathcal{T}$ is locally finite, we know that $\mathcal{F}$ is a finite collection. We may assume that $\mathcal{F}=\{T_{i_1},T_{i_2},\ldots,T_{i_l}\}$ where $v_{T_{i_l}}\prec\cdots\prec v_{T_{i_2}}\prec v_{T_{i_1}}$. Since $\mathcal{T}$ is a $k$-fold covering of $\mathbb{R}^2$, we have $l\geq k$. One can easily show that $(x,y)\notin U_{i_j}$ for $j=1,\ldots,k$. Hence $(x,y)\in S_{i_j}$ for $j=1,\ldots,k$.
\end{proof}

\begin{cor}\label{S_i_exact_k_fold}
$\{S_i\}$ is an exact $k$-fold tiling of $\mathbb{R}^2$.
\end{cor}

\begin{lem} \label{to_prove_no_R_cut_by_other}
Let $\overline{S_i}$ be the closure of $S_i$ and $R_i=\overline{S_i}\setminus S_i$. If $T_i$ cuts $T_j$, then $R_i\cap S_j=\emptyset$ (Fig. \ref{RS}).
\end{lem}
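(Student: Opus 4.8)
The plan is to argue by contradiction. Suppose some point $p$ lies in $R_i\cap S_j$ while $T_i$ cuts $T_j$; I will produce $k$ distinct members of $\mathcal{C}_j$ all containing $p$, which forces $p\in U_j$ and contradicts $p\in S_j=T_j\setminus U_j$. The crux of the argument is the simplifying observation that, although $R_i$ is the intricate ``staircase'' boundary of the stair polygon $S_i$, it lies entirely inside $U_i$, so every point of it comes equipped with $k$ triangles cutting $T_i$; the remaining work is to check that those same $k$ triangles also cut $T_j$.

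First I would record that $R_i\subseteq U_i$. Since $S_i\subseteq T_i$ and $T_i$ is closed, $R_i=\overline{S_i}\setminus S_i\subseteq\overline{S_i}\subseteq T_i$; as $p\in T_i$ but $p\notin S_i=T_i\setminus U_i$, we get $p\in U_i$. By the definition of $U_i$ there are $k$ distinct triangles $T_{a_1},\dots,T_{a_k}\in\mathcal{C}_i$ with $p\in T_{a_1}\cap\cdots\cap T_{a_k}$. In particular each $T_{a_l}$ cuts $T_i$, so $v_{T_i}\prec v_{T_{a_l}}$.

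Next I would promote these to members of $\mathcal{C}_j$. Because $T_i$ cuts $T_j$ we have $v_{T_j}\prec v_{T_i}$, hence $v_{T_j}\prec v_{T_{a_l}}$ by transitivity of $\prec$; in particular $v_{T_j}\neq v_{T_{a_l}}$, so $T_{a_l}\neq T_j$. Moreover $p\in S_j\subseteq T_j$ and $p\in T_{a_l}$, so $T_{a_l}\cap T_j\neq\emptyset$. Therefore each $T_{a_l}$ cuts $T_j$, i.e. $T_{a_l}\in\mathcal{C}_j$, and consequently $p\in T_{a_1}\cap\cdots\cap T_{a_k}\subseteq U_j$, contradicting $p\in S_j=T_j\setminus U_j$.

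I do not anticipate a genuine difficulty: the argument uses only the definitions of cutting and of $U_i$ together with transitivity of $\prec$, and it never touches the stair-polygon structure of $S_i$ (that structure is needed for the other lemmas, not this one). The spot that deserves the most care is the promotion step, verifying the distinctness and nonemptiness conditions that ``$T_{a_l}$ cuts $T_j$'' requires; a secondary point is the topological remark $\overline{S_i}\subseteq T_i$, where closedness of $T_i$ enters. If one preferred to avoid $U_i$ altogether, the same conclusion follows from Lemmas \ref{not_exceed_k_1}, \ref{at_least_k_S_contain} and \ref{S_i_exact_k_fold}: a point belongs to $S_m$ exactly when $T_m$ is among the $k$ triangles of $\mathcal{T}$ containing it whose vertices are $\prec$-largest, and $p\in R_i$ (so $p\in T_i\setminus S_i$) forces at least $k$ triangles through $p$ to cut $T_i$ and hence to cut $T_j$, which ejects $T_j$ from that top $k$ and yields $p\notin S_j$.
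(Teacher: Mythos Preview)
Your proof is correct and follows essentially the same route as the paper's: assume a point lies in $R_i\cap S_j$, observe it lies in $U_i$, pick $k$ triangles from $\mathcal{C}_i$ through it, and use transitivity of $\prec$ together with the hypothesis $T_i$ cuts $T_j$ to place those same triangles in $\mathcal{C}_j$, contradicting membership in $S_j$. You simply spell out more carefully the steps the paper leaves implicit (the inclusion $R_i\subseteq U_i$ via closedness of $T_i$, and the verification that each $T_{a_l}$ actually cuts $T_j$).
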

\begin{proof}
Assume that there is some point $(x,y)\in R_i\cap S_j$. We have that $(x,y)\in U_i$, and hence there exist $T_{i_1},\ldots,T_{i_k}\in \mathcal{C}_i$ such that $(x,y)\in T_{i_1}\cap\cdots\cap T_{i_k}$. Since $T_i$ cuts $T_j$ and $(x,y)\in T_j$, we obtain that $T_{i_1},\ldots,T_{i_k}\in \mathcal{C}_j$. Therefore $(x,y)\in U_j$, which is a contradiction.
\end{proof}

\begin{figure}[!ht]
  \centering
    \includegraphics[scale=.70]{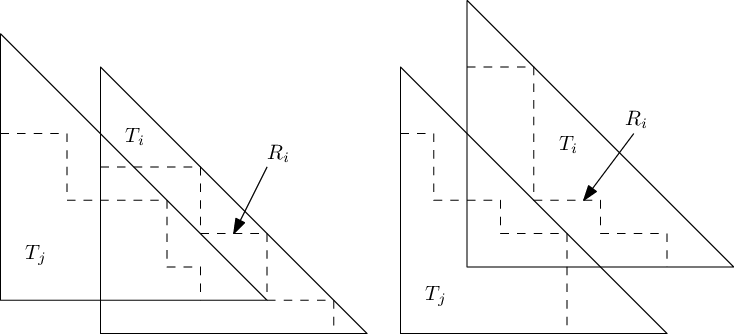}
   \caption{$R_i\cap S_j=\emptyset$}\label{RS}
\end{figure}

\begin{cor}
For every $i,j$, we have $R_i\cap S_j=\emptyset$ or $R_j\cap S_i=\emptyset$.
\end{cor}

If $\mathcal{T}=\{T_1,\ldots,T_N\}$ is a $k$-fold translative covering of $lI^2$ with $T$, then we define
$$S_i=lI^2\cap(T_i\setminus U_i).$$
We note that $S_i$ may be empty for some $i$. But when $S_i\neq\emptyset$, the statement of  Lemma \ref{S_i_stair_polygon} is still true, i.e., $S_i$ is a half-open stair polygon. Furthermore, it is not hard to see that all of the remaining lemmas and corollaries in this section are also true for the modified $S_i$ if $\mathbb{R}^2$ is changed to $lI^2$.

\section{$k$-Fold Tiling of $lI^2$ with Stair Polygons}
In this section, we suppose that $\{S_1,\ldots,S_N\}$ is an exact $k$-fold tiling of $lI^2$ where $S_i$ is a half-open stair polygon. Also, we assume that for every $i,j\in\{1,\ldots,N\}$, we have
\begin{equation}\label{not_cut_property}
R_i\cap S_j=\emptyset \text{~or~} R_j\cap S_i=\emptyset
\end{equation}
where $R_i=\overline{S_i}\setminus S_i$.

We may assume, without loss of generality, that $S_i$ is a half-open $r_i$-stair polygon and that
$$S_i=\bigcup_{j=0}^{r_i}[x_j^{(i)},x_{j+1}^{(i)})\times[y_{r_i+1}^{(i)},y_j^{(i)}),$$
where $x_0^{(i)}<x_1^{(i)}<\cdots<x_{r_i+1}^{(i)}$ and $y_0^{(i)}>y_1^{(i)}>\cdots>y_{r_i+1}^{(i)}$ (Fig. \ref{Si}). Denote by $v_{S_i}$ the vertex $(x_0^{(i)},y_{r_i+1}^{(i)})$ of $S_i$. Let
$$Z(S_i)=\{(x_j^{(i)},y_j^{(i)}): j=1,\ldots,r_i\}.$$

\begin{figure}[!ht]
  \centering
    \includegraphics[scale=1]{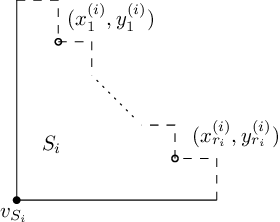}
   \caption{$S_i$}\label{Si}
\end{figure}

We will prove the following result.

\begin{thm}\label{approximate_average_stair_point}
$$\sum_{i=1}^{N}r_i\leq (2k-1)N.$$
\end{thm}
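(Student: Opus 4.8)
The plan is a charging argument: to each reflex corner of each $S_i$ I will assign one of the other stair polygons, in such a way that no stair polygon is assigned more than $2k-1$ times; summing then gives $\sum_{i=1}^N r_i\le (2k-1)N$.

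\emph{Local picture and the charge.} Fix $i$ and a reflex corner $z=(x_j^{(i)},y_j^{(i)})$ of $S_i$ (so $1\le j\le r_i$). Inspecting the half-open columns shows that near $z$ the set $S_i$ is exactly the union of half-planes $\{x<x_j^{(i)}\}\cup\{y<y_j^{(i)}\}$ (all four local quadrants except the north-east one); in particular $z\in R_i$ (hence $z\notin S_i$), while $z+(-\varepsilon,-\varepsilon)\in S_i$ and $z+(\varepsilon,\varepsilon)\notin S_i$ for all small $\varepsilon>0$. Since $\{S_1,\dots,S_N\}$ is an exact $k$-fold tiling and $z\notin S_i$, exactly $k$ of the stair polygons contain $z$, none of them $S_i$; and because $z\in R_i\cap S_m$ for each such $S_m$, the non-cut property \eqref{not_cut_property} forces $R_m\cap S_i=\emptyset$. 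A further inspection of the columns shows that a stair polygon $S_m$ contains $z$ but not $z+(-\varepsilon,-\varepsilon)$ if and only if $z$ lies on the ``lower-left boundary''
\[
L_m:=\Bigl(\{x_0^{(m)}\}\times[y_{r_m+1}^{(m)},y_0^{(m)})\Bigr)\cup\Bigl([x_0^{(m)},x_{r_m+1}^{(m)})\times\{y_{r_m+1}^{(m)}\}\Bigr)
\]
of $S_m$ (its left edge together with its bottom edge, both of which lie in $S_m$). Comparing the $k$-fold coverage at $z$ with that at $z+(-\varepsilon,-\varepsilon)$ — the latter point being covered by $k$ stair polygons, one of which is $S_i$, and $S_i$ does not contain $z$ — we see that at least one of the $k$ stair polygons through $z$ does not contain $z+(-\varepsilon,-\varepsilon)$, hence has $z\in L_m$. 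Let $\phi(i,z)$ be one such index $m$, and record that $R_{\phi(i,z)}\cap S_i=\emptyset$.

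\emph{Bounding the charges.} Fix $m$. Every pair $(i,z)$ with $\phi(i,z)=m$ has $z\in L_m$, $z$ a reflex corner of $S_i$, $i\neq m$, and $R_m\cap S_i=\emptyset$. Consider a $z=(x_0^{(m)},y_j^{(i)})$ on the left edge of $S_m$: then $S_i$ has a ``step'' on the vertical line $x=x_0^{(m)}$, whose exposed right edge is the sub-segment $\{x_0^{(m)}\}\times[y_j^{(i)},y_{j-1}^{(i)}]$ of $R_i$, with bottom endpoint $z$. Reading the $k$-fold tiling condition just to the left and just to the right of this line, at every height the total coverage dropped by all such steps (together with any stair polygons that terminate on the line) equals the coverage supplied by the stair polygons whose left edge lies on the line; the latter is at most $k$ at each height, so at most $k$ steps drop through any single height. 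This estimate alone is far too weak — steps could a priori accumulate in a long cascade of tiny dropped intervals — but the non-cut property rules such cascades out: if the exposed right edge of one step lay inside the column immediately to the right of $x_0^{(m)}$ of another step's polygon $S_{i'}$, then $R_i\cap S_{i'}\neq\emptyset$, and combined with $R_{i'}\cap S_i=\emptyset$ this contradicts the step structure. Making this rigidity quantitative, and treating the left and bottom edges of $L_m$ together (the corner $v_{S_m}$ lies on both), should confine the total number of such steps, hence the number of charges to $S_m$, to $2k-1$. Summing over $m$ then yields $\sum_{i=1}^N r_i=\sum_{m=1}^N\#\{(i,z):\phi(i,z)=m\}\le(2k-1)N$.

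The main obstacle is the last step: upgrading the crude ``at most $k$ steps per height'' bound to a genuine global bound of $2k-1$ on the number of steps along $L_m$ is where essentially all the work lies, and the constant $2k-1$ is sharp — in Sriamorn's lattice covering every $r_i$ equals $2k-1$, so the inequality is an equality there. One must also handle with care the degenerate configurations in which several reflex corners, or the endpoints of the various half-open intervals, share a coordinate, and the case $z=v_{S_m}$ in which $z$ lies on both pieces of $L_m$.
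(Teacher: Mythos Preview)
Your proposal is not a proof: you yourself flag that the crucial bound --- that each $S_m$ is charged at most $2k-1$ times via its lower-left boundary $L_m$ --- is ``where essentially all the work lies,'' and you never supply an argument for it. The heuristic that the non-cut property ``should'' rule out cascades of steps along $L_m$ is not a proof, and it is not obvious that a uniform bound of $2k-1$ on the number of reflex corners charged to a single $L_m$ even holds; the sharpness you note (equality in the lattice construction) tells you only that the \emph{average} is $2k-1$, not each individual count.

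The paper avoids this difficulty by reversing the direction of the count. Your observation that each reflex corner $z\in Z(S_i)$ lies on the lower-left boundary of some other $S_{i_j}$ is sharpened in the paper to: $z$ lies on the \emph{left} edge of $S_{i_j}$, i.e.\ $x_0^{(i_j)}=x_j^{(i)}$. The paper then uses the non-cut hypothesis together with the exact $k$-fold tiling to show that among the $r_i$ associated polygons $S_{i_1},\dots,S_{i_{r_i}}$, at most $k-1$ have their base vertex $v_{S_{i_j}}$ at or below the bottom edge of $S_i$; for all the remaining ones, $v_{S_{i_j}}\in\mathrm{Int}(S_i)\cup Z(S_i)$. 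Writing $n_i:=\#\{j:v_{S_j}\in\mathrm{Int}(S_i)\cup Z(S_i)\}$, this gives $n_i\ge r_i-(k-1)$. The upper bound $\sum_i n_i\le kN$ now follows directly from double counting and the $k$-fold tiling (each $v_{S_j}$ is counted at most $k$ times). Combining, $\sum_i r_i\le\sum_i n_i+(k-1)N\le(2k-1)N$. The key point is that the quantity $n_i$ admits a trivial global upper bound, whereas no such bound is apparent for the number of reflex corners landing on a fixed $L_m$.
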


First , we prove the following lemmas.

\begin{lem}\label{x_coordinate}
For every $(x,y)\in Z(S_i)$, there exists a $j\in\{1,\ldots,N\}\setminus\{i\}$ such that $(x,y)\in S_j$ and $x=x_0^{(j)}$ where $x_0^{(j)}$ is the $x$-coordinate of $v_{S_j}$ (Fig. \ref{SiSj}).
\end{lem}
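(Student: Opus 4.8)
The key observation is that every nonempty horizontal slice of a half open stair polygon is a half open interval with left endpoint at its base vertex. Precisely, for $S_m=\bigcup_{s=0}^{r_m}[x_s^{(m)},x_{s+1}^{(m)})\times[y_{r_m+1}^{(m)},y_s^{(m)})$ and any height $h$, the slice $\{u:(u,h)\in S_m\}$ is either empty or of the form $[x_0^{(m)},c)$: the block of index $s$ meets the line $\{y=h\}$ iff $y_{r_m+1}^{(m)}\le h<y_s^{(m)}$, and since $y_0^{(m)}>\cdots>y_{r_m+1}^{(m)}$ the set of such $s$ is an initial segment $\{0,\dots,J\}$, whence the slice equals $\bigcup_{s=0}^{J}[x_s^{(m)},x_{s+1}^{(m)})=[x_0^{(m)},x_{J+1}^{(m)})$. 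So the plan is: given $(x,y)=(x_t^{(i)},y_t^{(i)})\in Z(S_i)$ with $1\le t\le r_i$, first apply this to $S_i$ at height $y$, where the included blocks are exactly those with $s<t$, so the slice is $[x_0^{(i)},x_t^{(i)})=[x_0^{(i)},x)$. Hence $(x,y)\notin S_i$, while $(x-\varepsilon,y)\in S_i$ for every $\varepsilon\in(0,x-x_0^{(i)})$, a nonempty range because $t\ge 1$.

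Second, I would count incidences along the line $\{y=y_t^{(i)}\}$ just to the left of $x$. Since there are finitely many stair polygons, fix $\varepsilon>0$ with $\varepsilon<x-x_0^{(i)}$ small enough that the open interval $(x-\varepsilon,x)$ avoids the finite set $\{x_s^{(m)}:1\le m\le N,\ 0\le s\le r_m+1\}$, and set $P=\{m:(x,y)\in S_m\}$, $Q=\{m:(x-\varepsilon,y)\in S_m\}$. Both $(x,y)$ and $(x-\varepsilon,y)$ lie in $lI^2$ (the latter since $S_i\subseteq lI^2$; the former since its coordinates are $x_t^{(i)}<x_{r_i+1}^{(i)}\le l$ and $y_t^{(i)}<y_0^{(i)}\le l$), so the exact $k$-fold tiling hypothesis gives $|P|=|Q|=k$. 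From the first step $i\in Q\setminus P$, so $|P\setminus Q|=|Q\setminus P|\ge 1$ and we may pick $m\in P\setminus Q$.

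Third, I would check this $m$ works. As $m\in P$, the slice of $S_m$ at height $y$ contains $x$, so it is $[x_0^{(m)},c)$ with $x_0^{(m)}\le x<c$; as $m\notin Q$ it does not contain $x-\varepsilon$, and since $x-\varepsilon<x<c$ this forces $x_0^{(m)}>x-\varepsilon$, i.e. $x_0^{(m)}\in(x-\varepsilon,x]$, which by the choice of $\varepsilon$ means $x_0^{(m)}=x$. Also $m\neq i$ because $i\notin P$. Therefore $(x,y)\in S_m$, $m\neq i$, and $x$ is the $x$-coordinate of $v_{S_m}$, which is exactly what the lemma asks (with $m$ in the role of $j$).

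I expect the only real obstacle to be bookkeeping: establishing the "slice is a half open interval anchored at $x_0^{(m)}$" fact cleanly (it genuinely relies on strict monotonicity of the $y_s^{(m)}$), and choosing one $\varepsilon$ valid for all $N$ polygons so that moving rightward across the vertical line $\{x=\text{const}\}$ at height $y$ cannot turn on any $S_m$ other than one whose left edge lies exactly on that line. Note that hypothesis (\ref{not_cut_property}) is not needed for this lemma; only the exact $k$-fold tiling of $lI^2$ by half open stair polygons is used.
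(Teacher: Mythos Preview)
Your proof is correct and rests on essentially the same idea as the paper's: the horizontal slice of any stair polygon $S_m$ at a fixed height is either empty or a half-open interval $[x_0^{(m)},c)$, so combining this with the exact $k$-fold tiling forces some $S_j$ through $(x,y)$ to have its left edge exactly at $x$. The paper packages this as a max-and-contradiction argument (take $x_0=\max\{x_0^{(j)}:(x,y)\in S_j\}$; if $x_0<x$ then those $k$ polygons together with $S_i$ would all contain a point just left of $(x,y)$, contradicting exactness), whereas you compare the coverage sets $P$ and $Q$ at $(x,y)$ and $(x-\varepsilon,y)$ and use $|P|=|Q|=k$ with $i\in Q\setminus P$ to produce $m\in P\setminus Q$; these are equivalent formulations of the same argument, with yours being more explicit about the slice structure.
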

\begin{proof}
Let
$$\mathcal{F}=\{S_j : (x,y)\in S_j, j=1,\ldots,N\},$$
and
$$x_0=\max\{x' : x' \text{~is the~}x\text{-coordinate of~}v_{S_j}, S_j\in\mathcal{F}\}.$$
Obviously $x_0\leq x$. Furthermore, since $\{S_1,\ldots,S_N\}$ is an exact $k$-fold tiling of $lI^2$ and $(x,y)\in Z(S_i)\subset lI^2$, we know that $card\{\mathcal{F}\}=k$ and $S_i\notin \mathcal{F}$. If $x_0<x$, then one can show that
$$S_i\cap \bigcap_{S\in\mathcal{F}}S\neq \emptyset.$$
This is impossible, since $\{S_1,\ldots,S_N\}$ is an exact $k$-fold tiling of $lI^2$. Hence $x_0=x$.
\begin{figure}[!ht]
  \centering
    \includegraphics[scale=.85]{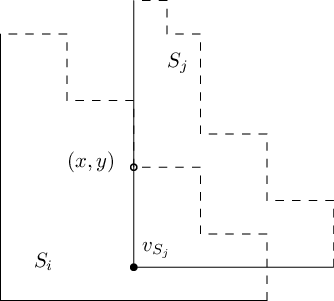}
   \caption{$S_i$ and $S_j$}\label{SiSj}
\end{figure}
\end{proof}
\begin{lem}\label{more_than_r_k_1}
For $i=1,\ldots,N$, let
$$n_i=card\{S_j : v_{S_j}\in Int(S_i)\cup Z(S_i), j=1,\ldots,N\}.$$
Then, we have
$$n_i\geq r_i-k+1.$$
\end{lem}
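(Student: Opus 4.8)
The plan is to count, for a fixed index $i$, how many of the stair-points in $Z(S_i)$ can fail to be "charged" to a vertex $v_{S_j}$ lying in $\mathrm{Int}(S_i)\cup Z(S_i)$, and to show this failure can happen at most $k-1$ times. By Lemma \ref{x_coordinate}, each point $(x_j^{(i)},y_j^{(i)})\in Z(S_i)$ is the $x$-coordinate level of the vertex $v_{S_{\sigma(j)}}$ of some $S_{\sigma(j)}\ne S_i$ containing it, i.e.\ $x_0^{(\sigma(j))}=x_j^{(i)}$. The point $v_{S_{\sigma(j)}}=(x_j^{(i)}, y_{r_{\sigma(j)}+1}^{(\sigma(j))})$ lies on the vertical line through $(x_j^{(i)},y_j^{(i)})$ and (since $(x_j^{(i)},y_j^{(i)})\in S_{\sigma(j)}$ and $S_{\sigma(j)}$ is a stair polygon whose lowest-left corner is $v_{S_{\sigma(j)}}$) it satisfies $y_{r_{\sigma(j)}+1}^{(\sigma(j))}\le y_j^{(i)}$; moreover the vertical segment from $v_{S_{\sigma(j)}}$ up to $(x_j^{(i)},y_j^{(i)})$ is contained in $\overline{S_{\sigma(j)}}$. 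Now either $v_{S_{\sigma(j)}}\in \mathrm{Int}(S_i)\cup Z(S_i)$ — in which case this stair-point is successfully charged — or $v_{S_{\sigma(j)}}\notin \mathrm{Int}(S_i)\cup Z(S_i)$, which (since it does sit on the vertical line $x=x_j^{(i)}$, at or below height $y_j^{(i)}$) forces $v_{S_{\sigma(j)}}$ to lie weakly below the floor of $S_i$ at that column, i.e.\ $y_{r_{\sigma(j)}+1}^{(\sigma(j))}\le y_{r_i+1}^{(i)}$, so the whole vertical strip $\{x_j^{(i)}\}\times[y_{r_i+1}^{(i)}, y_j^{(i)}]$ is covered by $\overline{S_{\sigma(j)}}$, and in particular $v_{S_i}=(x_0^{(i)},y_{r_i+1}^{(i)})$ is "seen" from the right by $S_{\sigma(j)}$ in a way that places a point just below-and-right of a boundary point of $S_{\sigma(j)}$ into $S_i\cap S_{\sigma(j)}$.

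The mechanism that limits the bad cases is the non-cut hypothesis \eqref{not_cut_property} together with the exact $k$-fold tiling property. Concretely, I expect to show: if $(x_j^{(i)},y_j^{(i)})$ is a bad stair-point with witness $S_{\sigma(j)}$, then taking a point $(x_j^{(i)}+\varepsilon,\,y_j^{(i)}-\varepsilon)$ just inside the corner of $S_i$ at that stair-point and pushing $\varepsilon\to 0$, one finds that $(x_j^{(i)},y_j^{(i)})\in R_{\sigma(j)}$ while $(x_j^{(i)},y_j^{(i)})\in S_i$; hence $R_{\sigma(j)}\cap S_i\ne\emptyset$, so by \eqref{not_cut_property} we must have $R_i\cap S_{\sigma(j)}=\emptyset$. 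This rigidity means all the witnesses $S_{\sigma(j)}$ of bad points, together with $S_i$, must overlap at a common point near $v_{S_i}$: the strips $\{x_j^{(i)}\}\times[y_{r_i+1}^{(i)},y_j^{(i)}]$ for bad $j$ all contain points arbitrarily close to the bottom edge of $S_i$, and the corresponding $S_{\sigma(j)}$'s, which all reach down to height $\le y_{r_i+1}^{(i)}$, will share a point with $S_i$ in the bottom-left cell $[x_0^{(i)},x_1^{(i)})\times[y_{r_i+1}^{(i)},y_0^{(i)})$. Since $\{S_1,\dots,S_N\}$ is an \emph{exact} $k$-fold tiling, at most $k$ sets meet any point; as $S_i$ itself is one of them, there are at most $k-1$ distinct witnesses, and a single witness $S_{\sigma(j)}$ can only be responsible for one bad stair-point of $S_i$ (its vertex determines a single $x$-coordinate). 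Therefore the number of bad stair-points is at most $k-1$, so at least $r_i-(k-1)$ stair-points are good, each contributing a distinct vertex $v_{S_j}$ (distinct because they lie at distinct $x$-coordinates $x_1^{(i)}<\cdots<x_{r_i}^{(i)}$, none equal to $x_0^{(i)}$, so $v_{S_j}\ne v_{S_i}$) in $\mathrm{Int}(S_i)\cup Z(S_i)$. Since distinct good stair-points also give distinct witnesses, $n_i\ge r_i-k+1$.

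The main obstacle I anticipate is making the "bad witnesses all share a point with $S_i$ near $v_{S_i}$" step fully rigorous, i.e.\ correctly handling the half-open boundary conventions and the precise geometry of where $v_{S_{\sigma(j)}}$ can sit relative to $S_i$'s staircase. One has to be careful that a witness $S_{\sigma(j)}$ whose vertex lies on the boundary $R_i$ (rather than strictly below the floor) is correctly classified, and that the $\varepsilon$-perturbation argument genuinely produces a common intersection point rather than merely pairwise intersections; this is exactly where the non-cut condition \eqref{not_cut_property} — which upgrades pairwise boundary contact into a usable one-sided containment — has to be invoked carefully. I would organize this step as a short separate sub-claim: "the bad witnesses $S_{\sigma(j)}$ and $S_i$ have nonempty common intersection," proved by exhibiting an explicit point of the form $v_{S_i}+(\varepsilon,\varepsilon)$ for small $\varepsilon>0$ and checking membership in each, then citing the exact $k$-fold property to bound their number.
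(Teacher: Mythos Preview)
Your overall strategy matches the paper's: use Lemma~\ref{x_coordinate} to attach to each stair-point $(x_j^{(i)},y_j^{(i)})$ a witness $S_{i_j}$ whose vertex has $x$-coordinate $x_j^{(i)}$, split the witnesses into ``good'' (vertex lands in $\mathrm{Int}(S_i)\cup Z(S_i)$) and ``bad'' ($y$-coordinate of the vertex at or below $y_{r_i+1}^{(i)}$), and bound the bad ones by $k-1$ via the exact $k$-fold property together with \eqref{not_cut_property}. However, the bounding step contains two linked errors.

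First, the membership is reversed. The stair-point $(x_j^{(i)},y_j^{(i)})$ lies in $R_i$, not in $S_i$: column $j$ of $S_i$ is $[x_j^{(i)},x_{j+1}^{(i)})\times[y_{r_i+1}^{(i)},y_j^{(i)})$, so the height $y_j^{(i)}$ is excluded. Meanwhile Lemma~\ref{x_coordinate} gives $(x_j^{(i)},y_j^{(i)})\in S_{i_j}$, not $R_{i_j}$. So the non-emptiness you actually have is $R_i\cap S_{i_j}\ne\emptyset$, and \eqref{not_cut_property} forces $R_{i_j}\cap S_i=\emptyset$ --- exactly the opposite direction from what you wrote. (Your stated conclusion $R_i\cap S_{\sigma(j)}=\emptyset$ is in fact false, since the stair-point itself witnesses the intersection.)

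Second, and consequently, the common point is not near $v_{S_i}$. Every witness $S_{i_j}$ has its left edge at $x=x_j^{(i)}\ge x_1^{(i)}>x_0^{(i)}$, so $v_{S_i}+(\varepsilon,\varepsilon)$ lies strictly to the left of each $S_{i_j}$ for small $\varepsilon$ and belongs to none of them. The correct common point sits on the bottom edge of $S_i$ toward the \emph{right}. For a bad witness ($y_{i_j}\le y_i$, where $y_i=y_{r_i+1}^{(i)}$) the point $(x_j^{(i)},y_i)$ lies in $S_i\cap S_{i_j}$; the horizontal slice $\{x:(x,y_i)\in S_{i_j}\}$ is an interval $[x_j^{(i)},a)$, and $(a,y_i)\in R_{i_j}$. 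Since $R_{i_j}\cap S_i=\emptyset$, we must have $a\ge x_{r_i+1}^{(i)}$, so in particular every bad witness contains $(x_{r_i}^{(i)},y_i)\in S_i$. Now $S_i$ and the (pairwise distinct, since their vertex $x$-coordinates $x_j^{(i)}$ differ) bad witnesses all contain this point, and the exact $k$-fold property gives at most $k-1$ bad witnesses. This is what the paper's one compressed sentence ``one can deduce that $\mathrm{card}\{\mathcal{F}\}\le k-1$'' is encoding.
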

\begin{proof}
Suppose that $Z(S_i)=\{(x_1^{(i)},y_1^{(i)}),\ldots,(x_{r_i}^{(i)},y_{r_i}^{(i)})\}$. From Lemma \ref{x_coordinate}, we know that for every $j=1,\ldots,r_i$, there exists an $i_j\in\{1,\ldots,N\}\setminus\{i\}$ such that $(x_j^{(i)},y_j^{(i)})\in S_{i_j}$ and $x_j^{(i)}=x_{i_j}$, where $x_{i_j}$ is the $x$-coordinate of $v_{S_{i_j}}$ (see Fig. \ref{Sij}). Let $y_i$ and $y_{i_j}$ be the $y$-coordinates of $v_{S_i}$ and $v_{S_{i_j}}$, respectively. Let
$$\mathcal{F}=\{S_{i_j} :y_{i_j}\leq y_i, j=1,\ldots,r_i\}.$$
From (\ref{not_cut_property}), we know that $R_{i_j}\cap S_i=\emptyset$ for all $S_{i_j}\in\mathcal{F}$. We note that $S_i\notin \mathcal{F}$. Since $\{S_1,\ldots,S_N\}$ is an exact $k$-fold tiling of $lI^2$, one can deduce that $card\{\mathcal{F}\}\leq k-1$. It is not hard to see that for every $S\in\{S_{i_1},S_{i_2},\ldots,S_{i_{r_i}}\}\setminus \mathcal{F}$, we have $v_S\in Int(S_i)\cup Z(S_i)$. Hence
$$n_i\geq card\{Z(S_i)\}-card\{\mathcal{F}\}\geq r_i-k+1.$$
\begin{figure}[!ht]
  \centering
    \includegraphics[scale=.85]{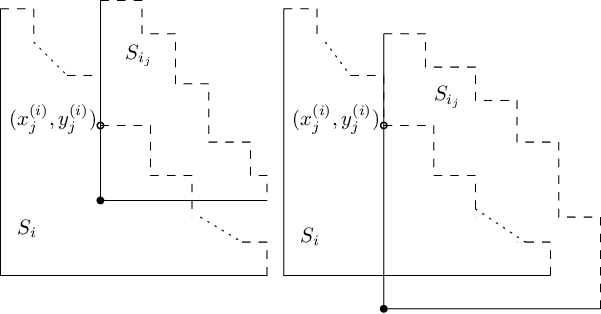}
   \caption{$S_{i_j}$}\label{Sij}
\end{figure}
\end{proof}

\begin{lem}\label{less_than_kN}
$$\sum_{i=1}^{N}n_i\leq kN$$
\end{lem}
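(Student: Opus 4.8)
The plan is to prove the bound by a double-counting argument combined with a short local analysis of a stair polygon near its lowest-leftmost vertex. First I would rewrite the left-hand side as a count of ordered pairs:
$$\sum_{i=1}^{N}n_i \;=\; card\{(i,j):v_{S_j}\in Int(S_i)\cup Z(S_i)\} \;=\; \sum_{j=1}^{N}c_j,\qquad c_j:=card\{i:v_{S_j}\in Int(S_i)\cup Z(S_i)\},$$
so that it suffices to prove $c_j\le k$ for each fixed $j$.

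Fix $j$, set $p=v_{S_j}=(x_0^{(j)},y_{r_j+1}^{(j)})$, and write its coordinates as $(x_p,y_p)$; since $p\in S_j\subset lI^2$ we have $p\in lI^2$. The key observation is that whenever $p\in Int(S_i)\cup Z(S_i)$, the polygon $S_i$ contains every point close enough to $p$ in the open ``north-west'' quadrant $\{(x,y):x<x_p,\ y>y_p\}$. If $p\in Int(S_i)$ this is immediate. If $p\in Z(S_i)$, say $p=(x_m^{(i)},y_m^{(i)})$ with $1\le m\le r_i$, it follows directly from the defining formula for $S_i$: the block $[x_{m-1}^{(i)},x_m^{(i)})\times[y_{r_i+1}^{(i)},y_{m-1}^{(i)})$ appearing in $S_i$ contains the point $(x_p-t,\ y_p+t)$ for every sufficiently small $t>0$ (here $y_m^{(i)}>y_{r_i+1}^{(i)}$ because $m\le r_i$). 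Since there are at most $N$ indices $i$ with $p\in Int(S_i)\cup Z(S_i)$, I can then pick a single $\varepsilon>0$ small enough that $q:=(x_p-\varepsilon,\ y_p+\varepsilon)$ lies simultaneously in all of those $S_i$.

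It remains to guarantee $q\in lI^2$. The condition $0\le y_p+\varepsilon<l$ holds for all small $\varepsilon$ since $0\le y_p<l$. For the abscissa, either $x_p>0$, and we also impose $\varepsilon<x_p$; or $x_p=0$, and then no $S_i\subseteq lI^2$ can contain $p$ in its interior (that would force $S_i$ to meet the half-plane $\{x<0\}$) and no $Z(S_i)$ can contain $p$ (points of $Z(S_i)$ have abscissa $x_m^{(i)}>x_0^{(i)}\ge 0$), so $c_j=0$ and there is nothing to prove. With $q\in lI^2$ in hand, $q$ belongs to all $c_j$ of the stair polygons $S_i$ for which $p\in Int(S_i)\cup Z(S_i)$, and since $\{S_1,\ldots,S_N\}$ is an exact $k$-fold tiling of $lI^2$, the point $q$ belongs to exactly $k$ sets of the family; hence $c_j\le k$. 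Summing over $j$ gives $\sum_{i=1}^{N}n_i\le kN$. I expect the only genuine work to be the geometric bookkeeping in the middle step --- choosing the quadrant about $v_{S_j}$ that lies in every relevant $S_i$ while keeping the test point inside $lI^2$, and checking the reflex-vertex case from the definition --- which is routine case analysis; note also that property (\ref{not_cut_property}) is not needed for this lemma.
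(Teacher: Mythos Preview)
Your proof is correct and follows the same double-counting strategy as the paper: both rewrite $\sum_i n_i$ as $\sum_j n_j^*$ (your $c_j$) and then bound each $n_j^*\le k$ using the exact $k$-fold tiling. The paper dismisses this last bound as ``obvious,'' while you supply the actual justification --- and it is worth noting that some justification is needed, since the reflex vertices in $Z(S_i)$ do \emph{not} lie in $S_i$ itself (the block $[x_m^{(i)},x_{m+1}^{(i)})\times[y_{r_i+1}^{(i)},y_m^{(i)})$ excludes its own corner $(x_m^{(i)},y_m^{(i)})$), so one cannot simply invoke the tiling at the point $v_{S_j}$. Your device of moving to a nearby point $q$ in the north-west quadrant, together with the boundary case $x_p=0$, handles this cleanly.
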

\begin{proof}
For $i=1,\ldots,N$, let $\mathcal{F}_i=\{S_j : v_{S_j}\in Int(S_i)\cup Z(S_i), j=1,\ldots,N\}$ and $\mathcal{F}^*_i=\{S_j : v_{S_i}\in Int(S_j)\cup Z(S_j), j=1,\ldots,N\}$. Clearly, we have $n_i=card\{\mathcal{F}_i\}$. Let $n_i^*=card\{\mathcal{F}^*_i\}$. It is not hard to show that $\sum_{i=1}^{N}n_i=\sum_{i=1}^{N}n_i^*$. On the other hand,
since $\{S_1,\ldots,S_N\}$ is an exact $k$-fold tiling of $lI^2$, it is obvious that $n_i^*\leq k$. Hence
$$\sum_{i=1}^{N}n_i=\sum_{i=1}^{N}n_i^*\leq kN$$
\end{proof}
Theorem \ref{approximate_average_stair_point} follows immediately from Lemma \ref{more_than_r_k_1} and Lemma \ref{less_than_kN}.

\section{Proof of Main Theorem}
If Theorem \ref{main_thm} were not true, then $\vartheta_T^k(T)<\vartheta_L^k(T)$ would hold.
By (\ref{lattice_density}) and the definition of $\vartheta_T^k$, there would exist a sufficiently large $l>0$ such that
$$\frac{m(T,k,l)|T|}{|lI^2|}<\frac{k|T|}{A(2k-1)},$$
i.e.,
$$|lI^2|>\frac{m(T,k,l)}{k}A(2k-1).$$
Hence, to prove Theorem \ref{main_thm}, it suffices to show that for every $l>0$ if $\mathcal{T}=\{T_1,\ldots,T_N\}$ is a $k$-fold translative covering of $lI^2$ with $T$, then
$$|lI^2|\leq \frac{N}{k}A(2k-1).$$

By Theorem \ref{normal_covering}, we may assume that $\mathcal{T}$ is a normal $k$-fold translative covering of $lI^2$. We define
$$S_i=lI^2\cap(T_i\setminus U_i),$$
where $i=1,\ldots,N$. Without loss of generality, we may assume that $S_i\neq\emptyset$ and $S_j=\emptyset$ when $i=1,\ldots,N'$ and $j=N'+1,\ldots,N$. We know that for all $i=1,\ldots,N'$, $S_i$ is a half-open $r_i$-stair polygon contained in $T_i$. Furthermore,  $\{S_1,\ldots,S_{N'}\}$ is an exact $k$-fold tiling of $lI^2$ and for every $i,j\in\{1,\ldots,N'\}$, $R_i\cap S_j=\emptyset$ or $R_j\cap S_i=\emptyset$, where $R_i=\overline{S_i}\setminus S_i$. By Theorem \ref{approximate_average_stair_point}, we know that
\begin{equation}
\sum_{i=1}^{N'}r_i\leq (2k-1)N'.
\end{equation}
Hence, by the definition and properties of the function $A$, one obtains
\begin{align*}
|lI^2|&=\frac{1}{k}\sum_{i=1}^{N'}|S_i|\\
&\leq \frac{1}{k}\sum_{i=1}^{N'}A(r_i)\\
&\leq \frac{N'}{k}A\left(\frac{1}{N'}\sum_{i=1}^{N'}r_i\right)\\
&\leq \frac{N'}{k}A(2k-1)\\
&\leq \frac{N}{k}A(2k-1)
\end{align*}
This completes the proof.
\section*{Acknowledgment}
This work was supported by 973 programs 2013CB834201 and 2011CB302401.

\newpage

\end{document}